\newtheorem{lemma}{Lemma}
\newtheorem{proposition}{Proposition}
\newtheorem{cor}{Corollary}
\def\blue{\textcolor{blue}} 
\newcommand{\argmin}{\mathop{\mathrm{argmin}}} 
\begin{document}

\begin{frontmatter}

\title{Solving constrained Procrustes problems: a conic optimization approach}


\author[1]{Ter\'ezia Fulov\'a\corref{cor1}} 
\ead{terezia.fulova@fmph.uniba.sk}
\author[1]{M\'aria Trnovsk\'a} 
\ead{trnovska@fmph.uniba.sk}
\cortext[cor1]{Corresponding author}

\address[a]{Faculty of Mathematics, Physics and Informatics, Comenius University in Bratislava, Mlynska dolina~F1, Bratislava, 84248, Slovakia}

\begin{abstract}
{\it Procrustes problems} are matrix approximation problems searching for a~transformation of the given dataset to fit another dataset. They find applications in numerous areas, such as factor and multivariate analysis, computer vision, multidimensional scaling or finance. The known methods for solving Procrustes problems have been designed to handle specific subclasses, where the set of feasible solutions has a special structure (e.g. a~Stiefel manifold), and the objective function is defined using a specific matrix norm (typically the Frobenius norm). We show that a wide class of Procrustes problems can be formulated and solved as a (rank-constrained) semi-definite program. This includes balanced and unbalanced (weighted) Procrustes problems,  possibly to a partially specified target, but also oblique, projection or two-sided Procrustes problems. The proposed approach can handle additional linear, quadratic, or semi-definite constraints and the objective function defined using the Frobenius norm but also standard operator norms. The results are demonstrated on a set of numerical experiments and also on real applications. 

\end{abstract}


\begin{keyword}
Weighted Procrustes problems \sep Orthogonal Procrustes problems \sep Oblique Procrustes problems \sep Two-sided Procrustes problems \sep Rank-constrained semi-definite programming

\end{keyword}

\end{frontmatter}


\section{Introduction}
\label{sec_intro}
In this paper, we study a general class of Procrustes problems (PPs), formulated as 
\begin{equation}\label{main}
	\begin{array}{rl}
		\min & f(X):=\|\mathcal{L}(X)\| \\
		\hbox{s.t.} & X\in \mathcal{P},
	\end{array}
\end{equation}
where $X\in \mathbb{R}^{m\times n}$ is the matrix variable and $\mathcal{L}: \mathbb{R}^{m \times n} \rightarrow \mathbb{R}^{p \times q}$ is a linear map.

In the objective, we will consider several types of norms, listed in the table below:
\begin{table}[!ht]
    \centering
    \begin{tabular}{l|l|l}\label{tabnorm}
       matrix norm for $Y \in \mathbb{R}^{m \times n}$ & notation & definition  \\
       \hline 
       $l_1$ norm & $\|Y\|_1$ & $\underset{1\le j \le n}{\max} \sum_{i=1}^m |Y_{ij}|$ \\
       $l_\infty$ norm & $\|Y\|_\infty$ & $\underset{1\le i \le m}{\max} \sum_{j=1}^n |Y_{ij}|$ \\
       $l_2$ (spectral norm) & $\|Y\|_2$ & maximal singular value of $Y$ \\
       Frobenius norm & $\|Y\|_F$  & $\sqrt{tr(YY^T)} $ \\
    \end{tabular}
    \caption{Matrix norms considered in the Procrustes problems \eqref{main}.}
    \label{norms}
\end{table}

The most commonly used matrix norm in the objective of the Procrustes problem \eqref{main} is the Frobenius norm (least-squares matrix approximation), see \cite{opp1, un_eig, symort, SDPOPP, vik2006, geomort, bojan, dingdong}.  A few authors (see \cite{l1norm, trenda}), consider the $l_1$ norm, which is a~robust alternative to the least squares. However, our conic approach also covers the $l_{\infty}$ norm and the spectral norm $l_2$.

The classification of PPs as {\it balanced} or {\it unbalanced} can be made based on the dimension of the matrix variable, as described in \cite{symort, un_eig, un_SP}. Balanced Procrustes problems refer to cases where the matrix variable is squared (i.e. $m=n$), while unbalanced Procrustes problems refer to cases where $m\neq n$, indicating that the matrix variable is rectangular.

Another classification of PPs can be made regarding the choice of the linear map $\mathcal{L}$. Suppose $A\in \mathbb{R}^{p\times m}, B\in \mathbb{R}^{n\times q}, C\in \mathbb{R}^{p\times q}$ are the given data.\footnote{Note that the general formulation \eqref{main} also covers the standard matrix approximation problems, that is, problems with $\mathcal{L}(X) = C-X$.} Then, we distinguish {\it standard} PPs \cite{OPP, symort} if $\mathcal{L}(X) = C-AX$, {\it weighted} PPs \cite{WOPP_trenda, diffWOPP} if $\mathcal{L}(X) = C-AXB$, {\it two-sided} PPs \cite{gowerOPP} if $\mathcal{L}(X) = CX-XA$ and PPs to a 
{\it partially specified target} \cite{l1norm} if $\mathcal{L}(X) = W \circ (C-AXB)$ where $\circ$ denotes the Hadamard (element-wise) product and the matrix $W$ specifies the target, 
i.e.
$$
W_{ij}=\begin{cases}
	1 & \hbox{if} \hspace{0.05cm} \ C_{ij} \hspace{0.05cm} \text{is specified,}\\
	0 & \hbox{otherwise.}
\end{cases}
$$

The feasible set $\mathcal{P}$ in \eqref{main} is typically a matrix manifold, which encompasses cases such as orthogonal Procrustes problems (OPPs) \cite{SDPOPP, geomort, bojan, opp1} and oblique Procrustes problems (ObPPs) \cite{trenda, gower}. Some authors also consider other types of feasible sets, such as the set of positive semi-definite matrices \cite{constpro}.
In our paper, we allow for linear, semi-definite, quadratic, and rank constraints to define the feasibility set $\mathcal{P}$. This means that we cover all the aforementioned classes as well as other challenging cases that are difficult to handle using standard approaches. These may include orthogonal or oblique Procrustes problems with additional linear constraints or Procrustes problems minimizing other than the Frobenius norm of $\mathcal{L}(X)$.

The most well-known subclass of PPs is the class of \textit{orthogonal Procrustes problems} (OPPs), where the matrix variable is assumed to be orthogonal or at least having orthogonal columns (rows). Since OPPs search for an orthogonal matrix, which maps the given set of data closest to another set of data (with respect to a given norm), they find application in numerous areas such as rigid body dynamics \cite{Rigidbody1, Rigidbody2}, psychometrics \cite{opp1, vik2006}, multidimensional scaling \cite{cox}, or global positioning system \cite{bell}. Furthermore, standard unbalanced OPPs map high-dimensional data (with dimension $m$) into a space with a~lower dimension $n << m$. This applies, for example, in orthogonal least squares regression (OLSR), which may be used for feature extraction \cite{un_eig, zhao}.

In the standard balanced orthogonal Procrustes problem, the goal is to find an orthogonal matrix $X$ that minimizes $\|C-AX\|_F$. In \cite{opp1} it was shown that this problem has a closed-form solution, which can be obtained using the singular value decomposition. Later publications have focused on accelerating the computational time -- e.g. in \cite{symort}, a method based on eigenvalue decomposition is proposed.

Unlike balanced OPPs, there is no closed-form solution to the other classes of Procrustes problems known and therefore an algorithmic approach is required.  Several algorithms have been proposed to solve unbalanced OPPs, using the structure of the Stiefel manifold. The approach introduced in \cite{bojan} uses relaxation-based iterations. This method involves relaxing the orthogonality constraint on $X$ and solving a sequence of relaxed sub-problems iteratively, until convergence is achieved. Another approach (see \cite{vik2006, geomort}) uses the Newton-type method to update $X$ in each iteration until a local optimum is reached. In addition to iterative methods, necessary and sufficient conditions for local optimality in unbalanced OPPs have been derived in \cite{elden}. These conditions provide insights into the properties of optimal solutions and can be used to guide the development of optimization algorithms for solving unbalanced OPPs.

The special case of unbalanced OPPs with $n=1$ is known as the trust-region subproblem of the trust-region method in optimization \cite{trustregion}. This knowledge was used in \cite{un_SP} to design the successive projection method, where all but one column of $X$ are fixed, and a trust-region sub-problem is solved in each iteration. In \cite{zhao}, an iterative algorithm based on the use of SVDs was introduced to solve the orthogonal least squares regression (OLSR), which has proven to be efficient in practice. More recently, in 2020, an eigenvalue-based approach was introduced in \cite{un_eig} that outperforms the successive projection method from \cite{un_SP}. Specifically, the authors proposed an iterative algorithm based on the self-consistent-field (SCF) iteration, which is an efficient method for solving eigenvector-dependent nonlinear eigenvalue problems. 

Several algorithms have been developed for the class of 
weighted OPPs (where $B\neq I_n$) associated with the Frobenius norm. 
They are based on the extension of standard unconstrained optimization algorithms to the case of Stiefel manifolds \cite{vik2006, prgradopp, algopp}. In addition, in \cite{WOPP_trenda} and \cite{diffWOPP}, an approach based on solving differential equations has been introduced for weighted OPPs. This approach has also been extended to solve weighted OPPs with the $l_1$ norm in the objective in \cite{l1norm}.
Efforts to solve OPPs using a conic optimization approach have been made in the past. In \cite{SDPOPP}, a relaxation-based approach was introduced, but it only addresses balanced OPPs with possible data uncertainties. Another SDP approach was used to solve unbalanced PPs in \cite{dingdong}, where vectorization was used to obtain a semi-definite relaxation of the unbalanced OPP.

Procrustes problems defined over a feasible set given by quadratic constraints of the form $diag(X^TX) = \mathbf{1}_n$ are referred to as {\it oblique Procrustes problems} (ObPP). It is common to consider the Frobenius norm (see \cite{Gower_opp, trenda_frob}) or the $l_1$ norm in the objective (see \cite{trenda, gss}). 
ObPPs arise in various applications, such as factor analysis \cite{mulaik} and shape analysis \cite{dryden}. The ObPPs to a~partially specified target and weighted ObPPs are also discussed in \cite{trenda}.

 On the other hand, our conic approach also covers weighted OPPs and can handle additional linear and semi-definite constraints in the problem formulation. Moreover, unlike existing approaches, our approach handles various matrix norms in the objective, such as the $l_1$ norm and the spectral norm, which are robust with respect to outliers. This makes our approach more appropriate for problems such as the orthogonal least squares regression, as stated in \cite{zhao}.

 A subclass of Procrustes problems defined over the cone of symmetric positive semi-definite matrices is known as \textit{semi-definite Procrustes problems} (SDPP). This problem has been studied in several works (see  \cite{constpro,sdp_PP_semi,sdp_PP_sol}, where the authors have formulated the necessary and sufficient conditions for the optimum and compared the performance of several numerical algorithms.
An algorithm for solving the SDPP, based on computing the optimality conditions using specific singular value decompositions, has been designed in \cite{sdp_PP_sol}. The SDPP's are recognized in numerous applications such as structural analysis \cite{sdp_pp_app1}, signal processing \cite{sdp_pp_app2}, and finance \cite{cov1}. The problem of finding the nearest covariance matrix can be formulated as a special case of SDPP with $m=n=p$ and $A=I_m$. This problem is commonly encountered when the initial estimate of a~covariance matrix is non-positive semi-definite, which is common e.g. in foreign exchange markets \cite{fazel, cov1}.

\begin{footnotesize}
\begin{table}[!ht]
    \centering
    \resizebox{\textwidth}{!}{  
    \begin{tabular}{|c|c|c|c|c|c|}
    \hline
        {\bf class} & {\bf type} & {\bf norm} & {\bf solution method} & {\bf source} & {\bf conic} \\ \hline \hline
        \multirow{13}{*}{OPP} & \multirow{3}{*}{balanced} & \multirow{3}{*}{Fro} & SVD & 
    \cite{opp1} & \multirow{3}{*}{\checkmark} \\  \cline{4-5}
        ~ & ~ & ~ & eigenvalue dec. & \cite{symort} & ~ \\ \cline{4-5}
        ~ & ~ & ~ & semi-definite relaxation & \cite{SDPOPP} & ~ \\ \cline{2-6}
        ~ & \multirow{6}{*}{unbalanced} & \multirow{6}{*}{Fro} & relaxation-based   & \cite{bojan} & \multirow{6}{*}{\checkmark} \\ \cline{4-5}
        ~ & ~ & ~ & Newton-type  & \cite{vik2006,geomort} & ~ \\ \cline{4-5}
        ~ & ~ & ~ & successive proj. & \cite{un_SP} & ~ \\ \cline{4-5}
        ~ & ~ & ~ & OLSR  & \cite{zhao} & ~ \\ \cline{4-5}
        ~ & ~ & ~ & eigenvalue-based  & \cite{un_eig} & ~ \\ \cline{4-5}
        ~ & ~ & ~ & SDP relaxation & \cite{dingdong} & ~ \\ \cline{2-6}
        ~ & \multirow{4}{*}{weighted} & \multirow{3}{*}{Fro} & Stiefel man.  & \cite{vik2006,prgradopp}, & \multirow{3}{*}{\checkmark} \\ 
        ~ & ~ & ~ & gradient & \cite{algopp} & ~\\ \cline{4-5}
        ~ & ~ & ~ & differential appr. & \cite{WOPP_trenda,diffWOPP} & ~ \\ \cline{3-6}
        ~ & ~ & $l_1$ & differential appr. &  \cite{l1norm} & \checkmark \\ \cline{3-6}
        ~ & ~ & $l_2$, $l_\infty$ & ~ & ~ & \checkmark \\ \hline \hline
        \multirow{5}{*}{ObPP} &  \multirow{3}{*}{standard} & \multirow{2}{*}{Fro} & projection  & \cite{gowerOPP} & \multirow{2}{*}{\checkmark} \\ \cline{4-5}
        ~ & ~ & ~ & differential appr. & \cite{trenda_frob} & ~ \\ \cline{3-6}
        ~ & ~ & $l_1$ & separation  & \cite{trenda, gss} & \checkmark \\  \cline{2-6}
        ~ &  \multirow{2}{*}{weighted} & $l_1$ & differential appr. & \cite{trenda} & \checkmark\\ \cline{3-6}
        ~ & ~ & Fro, $l_2$, $l_\infty$ & ~ & ~ & \checkmark \\ \hline \hline
        \multirow{3}{*}{SDPP} & ~ & \multirow{2}{*}{Fro} & optimality  & \cite{constpro,sdp_PP_semi}, & \multirow{2}{*}{\checkmark} \\ 
        ~ & ~ & ~ &  conditions & \cite{sdp_PP_sol} & ~\\ \cline{3-6}
        ~ & ~ & $l_1$, $l_\infty$, $l_2$ & ~ & ~ & \checkmark \\ \hline \hline
        PPP & ~ & ~ & ~ & ~ & \checkmark \\ \hline \hline
        add.c. & ~ & ~ & ~ & ~ &  \checkmark \\ \hline
    \end{tabular}}
    \caption{Solution methods for different classes of Procrustes problems. The last column indicates classes covered by the conic programming approach.  The shortcut "add.c." stands for additional (linear, or semi-definite) constraints.}
    \label{table_metody}
\end{table}
\end{footnotesize}

In this paper, we propose an approach based on reformulating the general Procrustes problem \eqref{main}, where the set $\mathcal{P}$ is assumed to be defined only using linear, semi-definite or (general) quadratic constraints, as a (rank-constrained) semi-definite program. We show that this approach covers a very wide class of Procrustes problems, including balanced and unbalanced weighted Procrustes problems to a partially specified target, with linear quadratic or semi-definite constraints. This includes orthogonal, oblique, semi-definite, or projection Procrustes problems. Also, our approach is not limited to a specific choice of matrix norm -- it is suitable for the Frobenius norm as well as for the $l_1, l_2$, or $l_{\infty}$ operator norms. To handle the reformulated problems, we combine a bisection method with
known techniques, such as the log-det heuristic \cite{fazel} and the so-called convex iteration algorithm \cite{dattorro}, designed primarily for rank-constrained feasibility problems.
The computational efficiency is demonstrated in several numerical examples, including a practical application. Furthermore, we compare the performance of the proposed bisection method to the existing methods for solving standard unbalanced OPPs and demonstrate its applicability to solve weighted OPPs and weighted ObPPs considering different matrix norms in the objective and also OPPs with additional linear constraints.

This paper is organized as follows: In Section \ref{sec_SDPapproach}, we introduce a reformulation of PPs of the form \eqref{main} into a (rank-constrained) semi-definite program. Unlike other approaches, it applies to all subclasses of PPs that fit our program scheme. In the third section we describe several methods for solving rank-constrained feasibility problems and drawing from the existence of methods able to find solutions proving a lower and upper bound on the optimal value, we propose a bisection method for solving rank-constrained problems. 
The fourth section contains numerical results and the last section concludes. 

\section{Semi-definite programming approach to Procrustes problems}
\label{sec_SDPapproach}

In this section, we show that the general Procrustes problem \eqref{main}
can be represented as a rank-constrained semi-definite program, provided
the feasible set $\mathcal{P}$ is defined only using linear, semi-definite or (general) quadratic constraints. In the objective, we consider four different matrix norms: the $l_1$ norm, the $l_2$ norm, the $l_\infty$ norm and the Frobenius norm.

First, we reformulate problems of the form \eqref{main} into 
equivalent problems with a linear objective. The corresponding optimization problems are stated in the following theorem.  The proof relies on auxiliary lemmas presented in
\ref{apend}.

\begin{proposition}\label{equivalence}
The problem \eqref{main} can be equivalently reformulated as follows. 
\begin{itemize}
    \item[a)] For the objective $\| \mathcal{L}(X) \|_F$, the problem \eqref{main} is equivalent to 
     \begin{equation}\label{mainfrob}
		\begin{array}{rl}
			\min & tr(Z) \\
			\hbox{s.t.} & X\in \mathcal{P}\\
			&    \begin{pmatrix}
	I_q & \mathcal{L}(X)^T \\
	\mathcal{L}(X)& Z
\end{pmatrix}\succeq 0. 
		\end{array}
	\end{equation}
    \item[b)] For the objective $\| \mathcal{L}(X) \|_1$, the problem \eqref{main} is equivalent to 
     \begin{equation}\label{mainl1}
	\begin{array}{rlrll}
		\min & t \\
		\hbox{s.t.} && X & \in \mathcal{P}, \\
               &  -S &\le \mathcal{L}(X) & \le S, \\
               & &S^T {\bf 1}_p &\le t {\bf 1}_q.
	\end{array}
	\end{equation}
    \item[c)] For the objective $\| \mathcal{L}(X) \|_{\infty}$, the problem \eqref{main} is equivalent to 
     \begin{equation}\label{maininf}
		\begin{array}{rlrll}
		\min & t \\
		\hbox{s.t.} && X & \in \mathcal{P}, \\
               &  -S &\le \mathcal{L}(X) & \le S, \\
               & &S {\bf 1}_q &\le t {\bf 1}_p.
	\end{array}
	\end{equation}
    \item[d)] For the objective $\| \mathcal{L}(X) \|_2$, the problem \eqref{main} is equivalent to 
     \begin{equation}\label{mainl2}
		\begin{array}{rl}
			\min & s \\
			\hbox{s.t.} & X\in \mathcal{P}\\
			& \begin{pmatrix}
	sI_p & \mathcal{L}(X) \\
	\mathcal{L}(X)^T& sI_q
\end{pmatrix}\succeq 0.
		\end{array}
	\end{equation}
\end{itemize}

\end{proposition}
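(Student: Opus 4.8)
The plan is to reduce \eqref{main} to its epigraph form in each case and then describe the relevant sublevel set of the norm — or, for the Frobenius norm, the epigraph of $Y\mapsto\|Y\|_F^2$ — by linear matrix inequalities or by linear inequalities. Concretely, minimizing $\|\mathcal{L}(X)\|$ over $\mathcal{P}$ is the same as minimizing a fresh scalar variable (or $tr(Z)$ in case (a)) subject to $X\in\mathcal{P}$ together with a certificate that this quantity dominates $\|\mathcal{L}(X)\|$. Since $\mathcal{P}$ restricts only $X$ while the auxiliary variables $Z,S,s,t$ are otherwise free, every feasible $X$ of \eqref{main} lifts to a feasible point of each reformulation and, conversely, every feasible point of a reformulation projects to a feasible $X$ of \eqref{main}; the real content is checking that this correspondence is value-preserving, i.e. that the inner minimization over $Z,S,s,t$ at a fixed $X$ returns exactly $\|\mathcal{L}(X)\|$ (or its square). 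The norm descriptions needed for this are precisely the auxiliary lemmas collected in \ref{apend}, so beyond invoking those the argument is mostly bookkeeping.

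For part (a) I would argue as follows. By the Schur complement lemma (from \ref{apend}), since $I_q\succ0$, the block constraint in \eqref{mainfrob} is equivalent to $Z\succeq\mathcal{L}(X)\mathcal{L}(X)^T$; taking traces gives $tr(Z)\ge tr(\mathcal{L}(X)\mathcal{L}(X)^T)=\|\mathcal{L}(X)\|_F^2$, with equality attained at the feasible choice $Z=\mathcal{L}(X)\mathcal{L}(X)^T$. Hence, for each fixed feasible $X$, the minimum of $tr(Z)$ over admissible $Z$ equals $\|\mathcal{L}(X)\|_F^2$, so \eqref{mainfrob} has optimal value $\big(\min\eqref{main}\big)^2$ with the same set of optimal $X$, because $t\mapsto\sqrt{t}$ is strictly increasing on $[0,\infty)$. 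Part (d) runs in parallel: $\sigma_{\max}(\mathcal{L}(X))\le s$ together with $s\ge0$ is equivalent to $s^2I_q\succeq\mathcal{L}(X)^T\mathcal{L}(X)$, which for $s>0$ is exactly the Schur complement of $sI_p$ in the block matrix of \eqref{mainl2}; the boundary case $s=0$ must be treated directly, where the block inequality forces $\mathcal{L}(X)=0$ and thus $\sigma_{\max}(\mathcal{L}(X))=0=s$. Minimizing $s$ over $X\in\mathcal{P}$ then reproduces \eqref{main} for the spectral norm, with equal optimal value.

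For parts (b) and (c), introduce the entrywise bounds $-S\le\mathcal{L}(X)\le S$, which already force $S\ge0$ and $S_{ij}\ge|\mathcal{L}(X)_{ij}|$ for all $i,j$. Then the $j$-th coordinate of $S^T\mathbf{1}_p$ equals $\sum_{i=1}^p S_{ij}\ge\sum_{i=1}^p|\mathcal{L}(X)_{ij}|$, and the $i$-th coordinate of $S\mathbf{1}_q$ equals $\sum_{j=1}^q S_{ij}\ge\sum_{j=1}^q|\mathcal{L}(X)_{ij}|$. Imposing $S^T\mathbf{1}_p\le t\mathbf{1}_q$ therefore yields $\|\mathcal{L}(X)\|_1=\max_j\sum_i|\mathcal{L}(X)_{ij}|\le t$, and imposing $S\mathbf{1}_q\le t\mathbf{1}_p$ yields $\|\mathcal{L}(X)\|_\infty=\max_i\sum_j|\mathcal{L}(X)_{ij}|\le t$. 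Conversely, for any feasible $X$ the choice $S=|\mathcal{L}(X)|$ (entrywise) makes all these inequalities hold with $t=\|\mathcal{L}(X)\|_1$, respectively $t=\|\mathcal{L}(X)\|_\infty$, so the least admissible $t$ for a fixed $X$ is exactly that norm. Consequently \eqref{mainl1} and \eqref{maininf} have the same optimal value and the same optimal $X$ as \eqref{main} for the $l_1$ and $l_\infty$ norms.

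I do not expect any genuinely hard step: the Schur complement identities and the $l_1$, $l_\infty$ sublevel descriptions are classical and are isolated as lemmas in \ref{apend}. The only points that need a little care are the attainment arguments that make each auxiliary minimization tight — so that the reformulation preserves the optimal $X$ rather than merely relaxing it — and the degenerate case $s=0$ in (d); everything else is direct substitution between the two problems.
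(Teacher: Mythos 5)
Your proposal is correct and follows essentially the same route as the paper: fix $X$, minimize out the auxiliary variables $Z,S,s,t$, use the Schur complement lemma for the Frobenius and spectral cases and the entrywise choice $S=|\mathcal{L}(X)|$ for the $l_1$ and $l_\infty$ cases, and check tightness. One point where you are in fact more careful than the paper's own argument: in part (a) you correctly note that the block constraint gives $tr(Z)\ge tr\left(\mathcal{L}(X)\mathcal{L}(X)^T\right)=\|\mathcal{L}(X)\|_F^2$, so \eqref{mainfrob} returns the \emph{square} of the optimal value of \eqref{main} and the equivalence of minimizers rests on the monotonicity of $t\mapsto\sqrt{t}$; the paper's proof instead writes $tr\left(\mathcal{L}(X^*)\mathcal{L}(X^*)^T\right)=\|\mathcal{L}(X^*)\|_F$, dropping the square, which is inconsistent with the definition $\|Y\|_F=\sqrt{tr(YY^T)}$ in Table \ref{norms}. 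Your reading is the correct one and should be kept.
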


\begin{proof}
  We will show that the optimal solution of the problem \eqref{main} (where the objective uses a specific matrix norm) defines an optimal solution of the corresponding problem \eqref{mainfrob}-\eqref{mainl2}, and vice versa. 

a) Let $\hat{X}$ be optimal for \eqref{main} with the Frobenius norm in the objective, and we define 
$\hat{Z}:=\mathcal{L}(\hat{X})\mathcal{L}(\hat{X})^T$.
Then, from Lemma \ref{schur} a) it follows that $(\hat{X}, \hat{Z})$ is feasible for \eqref{mainfrob}. Clearly $tr(\hat{Z})=f(\hat{X})$. Reversely, if $(X^*, Z^*)$ is optimal for
\eqref{mainfrob}, then $X^*$ is feasible for \eqref{main} with the Frobenius norm in the objective. From Lemma \ref{schur} a) and Lemma \ref{stopa} we have that
$$
tr(Z^*)\ge tr\left(\mathcal{L}(X^*)\mathcal{L}(X^*)^T\right)=
$$
$$
\|\mathcal{L}(X^*)\|_F=f(X^*).
$$
Therefore  
$$
f(X^*)\le tr(Z^*)\le tr(\hat{Z})=f(\hat{X})\le f(X^*), 
$$
where the second inequality follows from the optimality of $Z^*$ for \eqref{mainfrob} and the last inequality follows from optimality of $(\hat{X}, \hat{Z})$ for \eqref{main}. Therefore, $f(X^*) = f(\hat{X}) = \hat{t} = t^*$.

b) Let $\hat{X}$ be optimal for \eqref{main} with the $l_1$ norm in the objective. Define $\hat{S} \in \mathbb{R}^{p \times q}$ such that $\hat{S}_{ij}:= | \mathcal{L}(\hat{X})_{ij} |$ and $\hat{t} = \underset{j}{\max} \sum_{i=1}^p \hat{S}_{ij}$. Then
$(\hat{X}, \hat{S}, \hat{t})$ is feasible for \eqref{mainl1} and
$$
f(\hat{X})=\|\mathcal{L}(\hat{X})\|_1=\max_j\sum_{i=1}^p | \mathcal{L}(\hat{X})_{ij} |=\hat{t}.
$$
Reversely, if $(X^*, S^*, t^*)$ is optimal for
\eqref{mainl1}, then $X^*$ is clearly feasible for \eqref{main}. From the last constraint in \eqref{mainl1} we have
$t^* \geq \underset{j}{\max} \sum_{i=1}^p S^*_{ij}$, and from the second constraint we have $S^*_{ij} \geq | \mathcal{L}(X^*)_{ij} |$. Therefore
$$
t^* \geq \underset{j}{\max} \sum_{i=1}^p S^*_{ij} = \| S^* \|_1 \geq \| \mathcal{L}(X^*) \|_1 = f(X^*).
$$
To sum up, we have
$$ f(X^*) \le t^* \le \hat{t} = f(\hat{X}) \le f(X^*), $$
where the second inequality follows from the optimality of $t^*$ for \eqref{mainl1}, 
and the last inequality follows from the optimality of $\hat{X}$ for \eqref{main}. Therefore, $f(X^*) = f(\hat{X}) = \hat{t} = t^*$.

c) The proof is analogous to the proof of part b).

d) Let $\hat{X}$ be optimal for \eqref{main} with the $l_2$ norm in the objective. Define $\hat{s}:=\|\mathcal{L}(\hat{X})\|_2$.
Then, if we denote $\lambda_{max}(.)$ the maximal eigenvalue of a given matrix, we get 
 $$(\hat{s})^2 = \lambda_{max} \big( \mathcal{L}(\hat{X}) \mathcal{L}(\hat{X})^T \big),$$ 
 which is equivalent to $$(\hat{s})^2I_q - \mathcal{L}(\hat{X})^T\mathcal{L}(\hat{X})\succeq 0.
 $$ The Schur complement property from Corollary \ref{schurcor} a) gives
 $$
 \begin{pmatrix}
	\hat{s}I_p & \mathcal{L}(\hat{X}) \\
	\mathcal{L}(\hat{X})^T& \hat{s}I_q
\end{pmatrix}\succeq 0, 
 $$
and hence $(\hat{X}, \hat{s})$ is feasible for \eqref{mainl2} such that $\hat{s} = f(\hat{X})$. Reversely, if $(X^*, s^*)$ is optimal for \eqref{mainl2}, then $X^*$ is clearly feasible for \eqref{main}. By applying Corollary~\ref{schurcor} to the last constraint of \eqref{mainl2} we obtain
$$
(s^*)^2I_q-\mathcal{L}(X^*)^T \mathcal{L}(X^*)\succeq 0
$$
which is equivalent to 
 $$
 (s^*)^2 \geq \lambda_{max}\big( \mathcal{L}(X^*)^T \mathcal{L}(X^*) \big) = \|\mathcal{L}(X^*)\|_2^2.
 $$
 Therefore 
$$
f(X^*)\le s^* \le \hat{s}=f(\hat{X}) \le f(X^*), 
$$
where  the second inequality follows from the optimality of $s^*$ for \eqref{mainl2}, 
 and the last inequality follows from the optimality of $\hat{X}$ for \eqref{main}. Therefore, $f(X^*) = f(\hat{X}) = \hat{s} = s^*$.  
\end{proof}

Note that if $\mathcal{P}$ is defined by linear, convex quadratic, or semi-definite constraints, the problems \eqref{mainfrob}-\eqref{mainl2} are semi-definite programming problems. Furthermore, if 
$\mathcal{P}$  is defined by non-convex quadratic constraints, the problems \eqref{mainfrob}-\eqref{mainl2} are equivalent to rank-constrained SDP problems. The representations of quadratic constraints are listed in Table \ref{tab:qmcr} and can be easily derived using the standard matrix analysis results; see Lemma \ref{schur}, Lemma \ref{schurq} in \ref{apend}.

\begin{table}[]
	\centering
	\begin{tabular}{lll}
		Constraint & Representation &  \\
		\hline
		& &  \\
		$X^TX\preceq G$  & $V=\begin{pmatrix} I & X \\X^T & G\end{pmatrix}\succeq 0$ &    \\
		& &  \\
		$X^TX=G$  & $V=\begin{pmatrix}I & X \\X^T & G\end{pmatrix}\succeq 0$ &  $ rank(V) = m  $  \\
		&  &   \\
		$X^TX\succeq G$ & $V=\begin{pmatrix}I & X \\X^T & Y \end{pmatrix}\succeq 0$ & $ rank(V) = m,  $  $Y - G \succeq 0$  \\
	\end{tabular}
	\caption{Quadratic constraints representation via semi-definite, linear, and rank constraints. Quadratic constraints are given by $m \times n$ matrix $X$ and $n \times n$ matrix $G$.}
	\label{tab:qmcr}
\end{table}

Using the rank-constrained SDP representations in Table \ref{tab:qmcr}, we can also easily express quadratic constraints of the type 
$$
tr(X^TX)\le g, \ tr(X^TX)= g, \ tr(X^TX)\ge g
$$
or 
$$
diag(X^TX)\le h, \ diag(X^TX)= h, \ tr(X^TX)\ge h
$$
for scalar $g$ and vector $h\in \mathbb{R}^n$.
Note that non-convex matrix constraints include, e.g., orthogonal constraints $X^TX=I$ and projection constraints $X^TX=X$. 

The advantage of the SDP reformulation of the Procrustes problems is that we can handle a wide class of problems with algorithms for rank-constrained SDP problems. We recall that the rank is a~quasiconcave function on the cone of the positive semi-definite symmetric matrices, and its convex envelope is the trace function (see Theorem 1 in \cite{FHB}). Using these results, several rank minimization heuristics and rank reduction algorithms were designed for solving rank-constrained SDP problems using only conic optimization tools. 
The most well-known heuristic is the so-called trace heuristic \cite{fazel}, which was upgraded into the so-called log-det heuristic in \cite{FHBlogdet}. Besides these rank minimization heuristics, there are also several rank reduction algorithms that search for a lower-rank solution among feasible solutions starting from an initial solution of a higher rank. Such an algorithm is, e.g., the so-called convex iteration introduced in \cite{dattorro}. Moreover, thanks to the recent publication \cite{bertsimas}, even exact algorithms can be applied to solve rank-constrained problems.

\section{Solving rank-constrained  problems}

In the previous section, we have shown that the Procrustes problem
of the form \eqref{main} can be formulated as a rank-constrained semi-definite programming problem. The most common approach is to approximate the problem with its convex relaxation, which is obtained by omitting the rank constraint. This way we obtain a lower bound on the optimal value of \eqref{main}. However,  the contemporary interior-point methods for solving convex problems converge to the solution of the highest feasible rank (see \cite{ye}). Therefore, even if an optimal solution of the required rank exists in the set of optimal solutions of the convex relaxation, it is not guaranteed to be found. In this case, the rank reduction algorithm for solving the rank-constrained semi-definite problem can be applied, as proposed in \cite{lemon}.
 
However, typically an optimal solution of the required rank does not exist in the set of optimal solutions of the convex relaxation of \eqref{main}. Since in this case the problem is way more complex, several heuristics have been designed to solve rank-constrained semi-definite \textit{feasibility problems} of the form
\begin{equation}\label{feas}
\hbox{find}\ X: \ X\in \mathcal{C}, \ rank(X)\le k,
\end{equation}
or \textit{rank minimization problems} 
\begin{equation}\label{rankmin}
\min_{X\in \mathcal{C}} \ rank(X)
\end{equation}
where $\mathcal{C}\subseteq \mathcal{S}^n_+$ is a convex set. The well-known trace heuristic (see \cite{fazel})is based on the fact that $trace(X)$ is the convex envelope of $rank(X)$ on the set $\{X\in \mathcal{S}^n_+ \ | \ 0\preceq X\preceq I\}$.
To enhance the performance of the trace heuristic, the so-called $\log$-$\det$ heuristic was proposed in \cite{FHBlogdet}. The idea is to approximate the rank minimization problem \eqref{rankmin} with 
\begin{equation}\label{logdet}
\min_{X\in \mathcal{C}} \ \log \det(X + \delta I),
\end{equation}
which is then solved by an iterative method based on the first-order linear approximation. 

An approach for solving the feasibility problem \eqref{feas}
was proposed in \cite[\S 4.4.2]{dattorro}. It is based on iteratively solving two convex problems until convergence. 
In particular, in the $t$-th iteration we find
\begin{equation}
\label{CI1}
X_t := \displaystyle \argmin_{X \in \mathcal{C}} \ {trace (U_{t-1}X)} 
\end{equation} 
and
\begin{equation}
\label{CI2}
U_t :=  \displaystyle \argmin_{U: 0\preceq U\preceq I, trace(U)=n-k} trace (UX_t) 
\end{equation} 
The algorithm starts with $U_0=I$ and generates a sequence $\{U_t\}_{t=0}^{T}$ of the so-called direction matrices along with the sequence $\{X_t\}_{t=0}^{T}$ of approximate solutions of \eqref{feas} such that the sequence $\{ trace (X_tU_t)\}_{t=0}^{T}
$ is non-increasing.  It can be seen (e.g. in \cite[\S 4.1]{ali}) that the objective of \eqref{CI2} represents the sum of $n-k$ smallest eigenvalues of $X_t$. However, the sum of $n-k$ smallest eigenvalues of $X_t$ equal to zero is equivalent to $rank(X_t)\le k$, see Lemma~\ref{sumeig} in \ref{apend}. Therefore, if the sequence $\{ trace (X_tU_t)\}_{t=0}^{\infty}$ converges to zero, the algorithm converges to a rank-$k$-solution (with some accuracy).  

In the more complex case, our aim is to solve a problem 
\begin{equation}\label{rankf}
    \begin{array}{rl}
\min & f(X) \\
      & X\in \mathcal{C}, \\
      & rank(X)\le k,
    \end{array}
\end{equation}
where $\mathcal{C}\subseteq\mathcal{S}^n_+$ is a convex set. Assume that the problem \eqref{rankf} reaches an optimal solution $X^*$ and that the optimal value is $f^*:=f(X^*)$. 

In the following, we propose a \textit{bisection method} based on solving rank-constrained feasibility problems of the form
\begin{equation}\label{rankfeas}
    \begin{array}{rl}
\hbox{find} & X \\
      & X\in \mathcal{C}, \\
      & rank(X)\le k,\\
      & f(X)\le \gamma.
    \end{array}
\end{equation}

Let $l, u\in \mathbb{R}$ be such that $f^*\in [l,u]$. 
The value $l$, satisfying $l\le f^*$ can be found by solving the convex relaxation problem 
\begin{equation}\label{relax}
    \begin{array}{rl}
\min & f(X) \\
      & X\in \mathcal{C}, \\
    \end{array}
\end{equation}
and the value $u$, satisfying $f^*\le u$ can be set as $u=f(\bar{X})$, where $\bar{X}$ is a~feasible solution of \eqref{rankf}. Such a solution can be found e.g. by applying any heuristics mentioned above.\footnote{Note that if $\gamma = f(X_0)$ where $X_0$ is a solution of the convex relaxation \eqref{relax}, solving \eqref{rankfeas} finds an optimal solution of \eqref{relax} if such a solution exists.}

We design a bisection algorithm to solve the rank-constrained problem \eqref{rankf} as follows:
\begin{center}
\begin{algorithm}[H]
\SetAlgoLined
\textbf{Input:} Interval $[l,u]$ containing  $f^*$; 
Accuracy constant $\delta>0$\;
\textbf{Initialize:} $X_{\delta}\gets \bar{X}$\;
\While{$|u-l| \geq \delta$}{
      $\gamma \gets \frac{l + u}{2}$\;
      Solve \eqref{rankfeas} (or declare infeasibility)\;
      \eIf{there exists a solution $X_{\gamma}$ of \eqref{rankfeas}}
                    {
                    $X_{\delta}\gets X_{\gamma}$\;
                    $u\gets \gamma$\;
                    }{
                    $l\gets \gamma$\;
                    }
 }
 \textbf{Output:} $X^*_{\delta}:=X^{\delta}$ ($\delta$-optimal solution of \eqref{rankf}
 satisfying $rank(X^*_{\delta})\le k$)
 \caption{Bisection algorithm for solving rank-constrained problems}
 \label{alg}
\end{algorithm}
\end{center}

\begin{proposition}
The solution $X^*_{\delta}$ provided by Algorithm \ref{alg}  satisfies 
$$
|f(X^*_{\delta})-f^*|\le \delta.
$$
and it is obtained after $N=\left\lceil \log_2  \frac{| u - l|}{\rho} \right \rceil.
$ iterations.
\end{proposition}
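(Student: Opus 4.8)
The plan is to verify the two claims in the proposition separately: first the correctness bound $|f(X^*_\delta) - f^*| \le \delta$, and then the iteration count. Throughout I will use the loop invariant that at every point in the execution of Algorithm~\ref{alg} we have $l \le f^* \le u$ and that $X_\delta$ is a feasible solution of \eqref{rankf} with $f(X_\delta) \le u$. This invariant holds at initialization: by construction $f^* \in [l,u]$ is given as input, and $X_\delta = \bar X$ is feasible for \eqref{rankf} with $f(\bar X) = u$.

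Next I would show the invariant is preserved by one pass through the \textbf{while} body. Set $\gamma = (l+u)/2$. If the feasibility problem \eqref{rankfeas} admits a solution $X_\gamma$, then $X_\gamma \in \mathcal{C}$, $\mathrm{rank}(X_\gamma) \le k$, and $f(X_\gamma) \le \gamma$; hence $X_\gamma$ is feasible for \eqref{rankf}, so $f^* \le f(X_\gamma) \le \gamma$, and updating $u \gets \gamma$ and $X_\delta \gets X_\gamma$ keeps $l \le f^* \le u$ and $f(X_\delta) \le u$. If instead \eqref{rankfeas} is infeasible, then no feasible point of \eqref{rankf} has objective value $\le \gamma$, so in particular $f^* > \gamma$; updating $l \gets \gamma$ keeps $l \le f^* \le u$, and $X_\delta$ is untouched, so it still satisfies $f(X_\delta) \le u$ (the old $u$, which is $\ge$ the new $u$ trivially since $u$ is unchanged in this branch). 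In either branch the length $|u-l|$ is halved. When the loop terminates we have $|u-l| < \delta$, and from the invariant $l \le f^* \le u$ and $l \le f(X^*_\delta) = f(X_\delta) \le u$; since both $f^*$ and $f(X^*_\delta)$ lie in an interval of length less than $\delta$, we conclude $|f(X^*_\delta) - f^*| \le \delta$.

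For the iteration count, observe that each iteration multiplies the interval length by $\tfrac12$, so after $N$ iterations the length is $|u_0 - l_0|\, 2^{-N}$, where $[l_0,u_0]$ is the initial interval. The loop stops once this drops below the tolerance, i.e.\ once $|u_0 - l_0|\,2^{-N} < \delta$, which gives $N = \lceil \log_2 (|u-l|/\delta) \rceil$ (the statement writes $\rho$ for what is the accuracy constant $\delta$; I would standardize the notation).

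The only genuine subtlety — and the place I expect to spend the most care — is the infeasibility branch: one must argue that \eqref{rankfeas} being infeasible really does certify $f^* > \gamma$, which uses that \eqref{rankf} attains its optimum at $X^*$ (assumed in the paragraph preceding the algorithm) so that $f^* = f(X^*)$ is itself an achieved value; if the infimum were not attained one would only get $f^* \ge \gamma$, still enough for correctness but worth stating cleanly. Everything else is the routine bookkeeping of a bisection invariant, so I would present it compactly rather than belaboring it.
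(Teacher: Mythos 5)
Your proof is correct and follows essentially the same route as the paper's: maintain the invariant that $f^*$ and the current incumbent's objective value both lie in $[l,u]$, note that an infeasibility declaration for \eqref{rankfeas} certifies $f^*\ge\gamma$, and conclude from $|u-l|<\delta$ at termination. You additionally justify the iteration count and flag the $\rho$/$\delta$ notation mismatch, neither of which the paper's own proof addresses.
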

\begin{proof}
Denote 
$$
\mathcal{P}=\{ X\in \mathcal{C} \ | \ rank(X)\le k\}
$$
the set of feasible solutions of \eqref{rankf}
and denote $X^i:=X_{\delta}$ and $[l_i, u_i]$ the feasible solution and the corresponding interval in the $i$-th iteration, respectively.
In each iteration of Algorithm \ref{alg} we either find $X_{\gamma}\in \mathcal{P}$  satisfying $f^*\le f(X_{\gamma})\le \gamma$ or we find out that
no such solution exists. In the latter case, we have that for all $X\in \mathcal{P}$ it holds $f(X)>\gamma$ and therefore $f^*=\inf_{X\in \mathcal{P}}\ge \gamma$. Therefore, in each iteration, the property 
$f^*\in [l_i,u_i]$ is satisfied.
Our aim now is to show that in each iteration it holds $f(X^i)\in [l_i, u_i]$. 
Since at initialization, $X_{\delta}=\bar{X}$ is chosen so that $f(\bar{X})=u$, the property is satisfied in the first iteration. 
Next, we show that if $f(X^i)\in [l_i,u_i]$, then 
$f(X^{i+1})\in [l_{i+1},u_{i+1}]$.  If \eqref{rankfeas} is feasible, then 
$X^{i+1}$ is updated to $X_{\gamma}$ and hence $f(X^{i+1})\le \gamma=u_{i+1}$. Also, in this case $l_{i+1}=l_i\le f^*\le f(X^{i+1})$. 
On the other hand, if \eqref{rankfeas} is infeasible, we have that
$X^{i+1}=X^i$ and $l_{i+1}=\gamma \le f^*\le f(X^{i})\le u_i$.
Let $[l_N, u_N]$ be the final interval satisfying $u_N-l_N<\delta$. We have that the both values $f^*$ and $f(X^*_{\delta})$ belong to $[l_N, u_N]$ and therefore
$$
|f(X^*_{\delta})-f^*|\le u_N-l_N< \delta.
$$
\end{proof}

During practical implementation, we search for an optimal solution with the required rank with respect to some accuracy. Since the variables 
are positive semi-definite, we define the $\varepsilon$-rank of a matrix $X\in \mathcal{S}^n_+$ as follows:
\begin{equation}
\label{epsrank}
 \varepsilon \text{-}rank (X) = k \hspace{0.2cm} \Leftrightarrow \hspace{0.2cm} \lambda_1 > \varepsilon, ..., \lambda_k > \varepsilon, \lambda_{k+1} \le \varepsilon, ..., \lambda_n \le \varepsilon.   
\end{equation}
Algorithm \ref{alg} is based on a method for the rank-constrained feasibility problem \eqref{rankfeas}. For this purpose, any of the heuristics or methods mentioned at the beginning of this section can be applied.

\section{Numerical results}
\label{sec_num_res}

The computations have been executed in MATLAB R2019a \cite{matlab} on a~laptop with the 11th Gen Intel(R) Core(TM) i7-1165G7 processor running at 2.80GHz. To solve SDP programs, we used CVX: a package for specifying and solving convex programs \cite{cvx1, cvx2}. The rank of a matrix was determined as the $\varepsilon$-rank according to \eqref{epsrank} for $\varepsilon = 10^{-6}$. For some experiments, we determine an "empirical" $\varepsilon$ as the $(k+1)$-th eigenvalue of a~solution. This value can be useful to analyze the performance of a particular algorithm or the quality of a found solution. The orthogonal matrix was generated using the build-in function \texttt{RandOrth()} from Matlab libraries and the oblique matrix was generated as $X_{oblique} = X Diag(diag(X^TX)^{\frac{1}{2}})$  where $X$ is a randomly generated $m \times n$ matrix, as proposed in \cite{trenda_frob}. Then we generate problems with a zero optimal value by generating data matrices for the problem \eqref{main} and setting $C=AXB$, or with a non-zero optimal value, where we generated also a~matrix $\Delta \in \mathbb{R}^{p \times q}$ from $N(0,1)$ to define $C=AXB + 0.5\Delta$, as suggested in~\cite{WOPP_trenda}.

In the following sets of experiments, we first demonstrate the applicability of the proposed conic approach to find a solution of an application of feature extraction with the Frobenius norm and even with the $l_1$ norm in the objective. Second, we demonstrate the versatility of the proposed conic approach in solving weighted OPPs and weighted ObPPs defined in terms of different norms. Then, we apply the proposed conic approach to find a permutation matrix that solves a system of linear equations, since such a problem can be formulated as a standard balanced OPP with additional linear constraints. Finally, we solve a graph isomorphism problem which can be formulated as a two-sided OPP. To assess the feasibility of the resulting solution $X$, we use the criteria $\|X^TX-I_n\|_F$ to verify orthogonality and $\|diag(X^TX)-{\bf 1}\|_1$ to verify obliqueness.

\subsection{Weighted OPPs and weighted ObPPs with the Frobenius, $l_1$, $l_2$ and $l_\infty$ norm in the objective}

This subsection focuses on weighted PPs of the form
\begin{equation}\label{WOPP}
	\begin{array}{rl}
		\min & \|C-AXB\|\\
		\hbox{s.t.} & X \in \mathcal{P}, \\
	\end{array}
\end{equation}
where $B\ne I_n$ and $\mathcal{P} = \lbrace X \in \mathbb{R}^{m \times n} \hspace{0.1cm} | \hspace{0.1cm} X^TX = I_n \rbrace$ in case of OPPs or $\mathcal{P} = \lbrace X \in \mathbb{R}^{m \times n} \hspace{0.1cm} | \hspace{0.1cm} diag(X^TX) = {\bf 1}_n \rbrace$ in case of ObPPs. Our aim is to demonstrate the versatility of the proposed conic approach by applying it to the weighted PPs regarding four matrix norms in the objective of \eqref{WOPP}, see Table \ref{norms}.

It is important to note that there are several effective methods for solving weighted OPPs with the Frobenius norm in the objective, including the spectral projected gradient method \cite{trenda_frob,prgradopp} which finds an optimal orthogonal or oblique solution within a few seconds also for large-size problems, as demonstrated by the experiments. However, unlike the approach in \cite{trenda_frob,prgradopp} the conic approach can also be applied to  weighted PPs with respect to $l_1, l_2$ or $l_{\infty}$ norm, see the results in Table \ref{table_WOPP_bisection} and Table \ref{table_WObPP_bisection}. 

For the case of $l_1$ norm in the objective, a differential approach was proposed in \cite{l1norm,trenda} to solve weighted OPPs and weighted ObPPs with the $l_1$ norm in the objective.
However, the performance of such an approach was illustrated on small examples, and the authors labeled this approach to be time-consuming since using build-in Matlab functions for ODE calculations. Regarding $l_2$ norm and $l_\infty$ norm in the objective of \eqref{WOPP}, there are no significant results in the literature (compare to Table \ref{table_metody}).

The results obtained by the proposed bisection method applied to the rank-constrained SDP reformulation of \eqref{WOPP} are summarized in Table \ref{table_WOPP_bisection} for weighted OPPs and Table \ref{table_WObPP_bisection} for  weighted ObPPs.  We can observe that, in all cases, orthogonal or oblique solutions were found by Algorithm \ref{alg} yielding only a slightly higher optimal value than the semi-definite relaxation. The results demonstrate that the conic approach is a possible computational tool for solving this class of OPPs, though being time-consuming. As expected, when solving weighted OPPs with the Frobenius norm in the objective, the conic approach  cannot compete with the spectral projected gradient method \cite{trenda_frob,prgradopp}. 
However, in the case of $l_1$ norm, it can be considered a reasonable alternative to the differential approach\cite{l1norm}, and, to our best knowledge  there are no alternative computational methods in the case of  $l_2, l_{\infty}$. 

\begin{footnotesize}
\begin{table}[t!]
\centering
\resizebox{\textwidth}{!}{  
\begin{tabular}{|c||c||c||c||c|}
\hline
\multirow{2}{*}{\begin{tabular}[c]{@{}l@{}} norm \end{tabular}}  & \multirow{2}{*}{\begin{tabular}[c]{@{}l@{}} criterion  \end{tabular}}   & \multirow{2}{*}{\begin{tabular}[c]{@{}l@{}} SDP \\ relaxation  \end{tabular}}     & \multirow{2}{*}{\begin{tabular}[c]{@{}l@{}} Algorithm \ref{alg} \\ ($\log$-$\det$)  \end{tabular}}  & \multirow{2}{*}{\begin{tabular}[c]{@{}l@{}} Algorithm \ref{alg} \\ (cvx.iter.)  \end{tabular}} \\ 
& & & & \\ \hline  \hline 
\multirow{4}{*}{\begin{tabular}[c]{@{}l@{}} Frob. \\ norm \end{tabular}}               &  $\|C-AXB\|_F$ & 2.2556   & 2.3726        & 2.3379      \\ \cline{2-5}
              & $\|X^TX-I_n\|_F$ & 1.0835 & 1.9947e-06    & 1.4682e-06   \\ \cline{2-5}
              & $\varepsilon\text{-}rank(V)$ & 6.28       & 4             & 4     \\ \cline{2-5}
              & time (s)  & 0.2642   & 47.5041       & 93.9932      \\ \cline{2-5}
              & \% ($\varepsilon$-$rank(V)\le m$)   & 0     & 100        & 100       \\ \hline  \hline 
\multirow{4}{*}{\begin{tabular}[c]{@{}l@{}} $l_1$ \\norm \end{tabular}}        &  $\|C-AXB\|_1$& 3.6718  & 3.8106       & 3.8339      \\ \cline{2-5}
              &  $\|X^TX-I_n\|_F$ & 1.0865 & 9.2466e-07    & 9.2440e-07   \\ \cline{2-5}
              & $\varepsilon\text{-}rank(V)$ & 7.02   & 4     & 4            \\ \cline{2-5}
              & time (s)  & 0.2489  & 40.6750     & 84.4602     \\ \cline{2-5}
              & \% ($\varepsilon$-$rank(V)\le m$)   & 0       & 100          & 100         \\ \hline  \hline 
\multirow{4}{*}{\begin{tabular}[c]{@{}l@{}} $l_2$ \\ norm \end{tabular}}      & $\|C-AXB\|_2$
& 1.7020  & 1.74875        & 1.7486    \\ \cline{2-5}
              & $\|X^TX-I_n\|_F$ & 1.1803 & 7.3899e-07    & 7.8219e-07   \\ \cline{2-5}
              & $\varepsilon\text{-}rank(V)$ & 7.38       & 4             & 4            \\ \cline{2-5}
              & time (s) & 0.2807   & 32.6941   & 76.4913   \\ \cline{2-5}
              & \% ($\varepsilon$-$rank(V)\le m$)   & 0         & 100          & 100       \\ \hline  \hline 
\multirow{4}{*}{\begin{tabular}[c]{@{}l@{}} $l_\infty$ \\norm \end{tabular}}  & $\|C-AXB\|_\infty$ & 1.3773  & 1.4848   & 1.4718   \\ \cline{2-5}
              & $\|X^TX-I_n\|_F$ & 1.1161  & 1.4266e-06  & 9.8376e-07 \\ \cline{2-5}
             & $\varepsilon\text{-}rank(V)$ &  6.76  & 4   & 4 \\ \cline{2-5}
              & time (s) & 0.2421      & 38.0758      & 72.5916     \\ \cline{2-5}
              & \% ($\varepsilon$-$rank(V)\le m$)   & 0    & 100        & 100           \\ \hline 
\end{tabular} }
\caption{Results obtained by the proposed conic approach in solving weighted OPPs with the Frobenius norm, $l_1$ norm, $l_\infty$ norm, and spectral norm in the objective. Average values of optimal value, orthogonality criterion, $\varepsilon$-rank of the block matrix $V$, computation time and percentage of solutions having $\varepsilon$-rank equal to $m=4$ obtained by semi-definite relaxation and Algorithm \ref{alg} in solving 100 generated problems of size $(m,n,p,q)=(10,4,4,3)$ with optimal value $f^*\neq 0$.}
\label{table_WOPP_bisection}
\end{table}
\end{footnotesize}

\begin{footnotesize}
\begin{table}[h!]
\centering
\resizebox{\textwidth}{!}{  
\begin{tabular}{|c||c||c||c||c|}
\hline
\multirow{2}{*}{\begin{tabular}[c]{@{}l@{}} norm \end{tabular}}  & \multirow{2}{*}{\begin{tabular}[c]{@{}l@{}} criterion  \end{tabular}}   & \multirow{2}{*}{\begin{tabular}[c]{@{}l@{}} SDP \\ relaxation  \end{tabular}}     & \multirow{2}{*}{\begin{tabular}[c]{@{}l@{}} Algorithm \ref{alg} \\ ($\log$-$\det$)  \end{tabular}}  & \multirow{2}{*}{\begin{tabular}[c]{@{}l@{}} Algorithm \ref{alg} \\ (cvx.iter.)  \end{tabular}} \\ 
& & & & \\ \hline  \hline 
\multirow{4}{*}{\begin{tabular}[c]{@{}l@{}} Frob. \\ norm \end{tabular}}               &  $\|C-AXB\|_F$ & 2.1410     & 2.1533        & 2.1658       \\ \cline{2-5}
              & $\|diag(X^TX)-{\bf 1}_n\|_1$ & 2.6696e-01 & 3.6455e-06    & 2.1883e-08   \\ \cline{2-5}
              & $\varepsilon\text{-}rank(V)$ & 5.45         & 4             & 4     \\ \cline{2-5}
              & time (s)  & 0.2624    & 37.6552       & 21.5325      \\ \cline{2-5}
              & \% ($\varepsilon$-$rank(V)\le m$)   & 10     & 100        & 100       \\ \hline  \hline 
\multirow{4}{*}{\begin{tabular}[c]{@{}l@{}} $l_1$ \\norm \end{tabular}}        &  $\|C-AXB\|_1$& 3.3923   & 3.4455        & 3.3415       \\ \cline{2-5}
              &  $\|diag(X^TX)-{\bf 1}_n\|_1$ & 0.5372 & 5.2837e-07    & 5.2837e-07   \\ \cline{2-5}
              & $\varepsilon\text{-}rank(V)$ & 7.40   & 4     & 4            \\ \cline{2-5}
              & time (s)  & 0.2318   & 14.9645       & 38.4616      \\ \cline{2-5}
              & \% ($\varepsilon$-$rank(V)\le m$)   & 5       & 100          & 100         \\ \hline  \hline 
\multirow{4}{*}{\begin{tabular}[c]{@{}l@{}} $l_2$ \\ norm \end{tabular}}      & $\|C-AXB\|_2$
& 1.6164    & 1.6246        & 1.6215       \\ \cline{2-5}
              & {\small $\|diag(X^TX)-{\bf 1}_n\|_1$} & 1.1270  & 1.0289e-05    & 1.3706e-07   \\ \cline{2-5}
              & $\varepsilon\text{-}rank(V)$ & 7.7       & 4             & 4            \\ \cline{2-5}
              & time (s) & 0.2337   & 15.4821       & 36.2344      \\ \cline{2-5}
              & \% ($\varepsilon$-$rank(V)\le m$)  & 0         & 100          & 100       \\ \hline  \hline 
\multirow{4}{*}{\begin{tabular}[c]{@{}l@{}} $l_\infty$ \\norm \end{tabular}}  & $\|C-AXB\|_\infty$ & 0.9296      & 1.2757        & 1.3016       \\ \cline{2-5}
              & $\|diag(X^TX)-{\bf 1}_n\|_1$ & 0.3955  & 4.1862e-07  & 4.0443e-07 \\ \cline{2-5}
             & $\varepsilon\text{-}rank(V)$ &  6.7  & 4   & 4 \\ \cline{2-5}
              & time (s) & 0.2302      & 16.2491       & 36.9748      \\ \cline{2-5}
              & \% ($\varepsilon$-$rank(V)\le m$)  & 15    & 100        & 100           \\ \hline 
\end{tabular} }
\caption{Results obtained by the proposed conic approach in solving weighted ObPPs with the Frobenius norm, $l_1$ norm, $l_\infty$ norm, and spectral norm in the objective. Average values of optimal value, orthogonality criterion, $\varepsilon$-rank of the block matrix $V$, computation time and percentage of solutions having $\varepsilon$-rank equal to $m=4$ obtained by the semi-definite relaxation and Algorithm \ref{alg} in solving 20 generated {\it weighted} ObPPs with the Frobenius norm, $l_1$ norm, $l_\infty$ norm and spectral norm in the objective of size $(m,n,p,q)=(10,4,4,3)$ with optimal value $f^*\neq 0$.}
\label{table_WObPP_bisection}
\end{table}
\end{footnotesize}

\subsection{Standard balanced OPPs with additional linear constraints}

In this subsection, we address the problem of finding a permutation matrix that minimizes the objective of the standard balanced OPP \eqref{OPPstd} with the $l_1$ norm in the objective. The $l_1$ norm is preferred since it enables handling matrices with many zero elements more effectively. It is worth noting that a~permutation matrix is an orthogonal matrix with unit row and column sums, having elements of 0 or 1. Although the problem of finding such a~matrix is an integer programming problem, we use the fact that a permutation matrix can be represented by an orthogonality constraint and constraints on its nonnegative elements. Therefore, the standard balanced OPP with additional linear constraints representing finding a~permutation matrix can be formulated as follows
\begin{equation}
\label{OPP_lin_ohr}
\begin{array}{crcl}
\underset{X \in \mathbb{R}^{n \times n}}{\min} & \|C - AX\|& {} & {}\\
&X^TX & = & I_n \\
&X_{ij} & \geq & 0, \hspace{1cm} \forall i,j = 1,...,n. \\
\end{array}
\end{equation}

In the following set of experiments, we solve the OPP with the additional linear constraints of the form \eqref{OPP_lin_ohr} generated for a random permutation matrix of size $n$. We focus on problems with a zero optimal value, which enables interpreting \eqref{OPP_lin_ohr} as the problem of finding a permutation matrix that satisfies a linear system of equations.

Table \ref{OPP_perm_L1} shows that a rank-$n$ solution was obtained by the semi-definite relaxation of the proposed rank-constrained SDP reformulation of \eqref{OPP_lin_ohr} in all cases. We test the optimality and feasibility of the found solutions of \eqref{OPP_lin_ohr} using several criteria listed in the table.
To conclude, the proposed conic approach was successful in finding a $\varepsilon$-optimal solution of all generated OPPs with additional linear constraints.

\begin{table}[!ht]
\centering
\begin{tabular}{|c||c|c|c|}
\hline 
   criterion $\backslash$ ($m$,$n$,$p$)                   & (3,3,10)   & (5,5,20)   & (10,10,30) \\ \hline \hline
$\|C-AX\|_1$        & 3.0607e-10 & 4.4304e-10 & 5.2880e-10 \\ \hline
$\|X^TX-I_n\|_F$                  & 2.3621e-11 & 2.4869e-11 & 2.3348e-11 \\ \hline
$\varepsilon\text{-}rank(V)$             & 3          & 5          & 10         \\ \hline
$\#$ of $\varepsilon\text{-}rank(V)>m$ & 0          & 0          & 0          \\ \hline
$\|X {\bf 1}_n - {\bf 1}_n\|_1$   & 4.1927e-12 & 7.6000e-13 & 3.8232e-13 \\ \hline
$\|X^T {\bf 1}_n - {\bf 1}_n\|_1$   & 4.1299r-12 & 8.3364e-13 & 1.8042e-13 \\ \hline
$\|o_{max} - {\bf 1}_n\|_1$    & 1.4740e-11 & 2.4448e-11 & 3.4865e-11 \\ \hline
$\|z_{min} - {\bf 0}_{n(n-1)}\|_1$     & 1.8849e-11 & 2.5154e-11 & 3.4811e-11 \\ \hline
time (s)      & 0.1598     & 0.2892     & 0.5307     \\ \hline
empirical $\varepsilon$   & 3.6912e-11 & 2.2944e-11 & 1.4833e-11 \\ \hline
\end{tabular}
\caption{Results obtained by the semi-definite relaxation in solving standard balanced OPPs with the $l_1$ norm in the objective and additional linear constraints that represent searching for permutation matrices. Average values of the listed criteria counted for 100 generated problems of each size. Here, $o_{max}$ denotes a vector of $n$ largest elements of $X$ and $z_{min}$ denotes a vector of $n(n-1)$ smallest elements of $X$.}
\label{OPP_perm_L1}
\end{table}

\subsection{Application - Feature extraction}

Consider the Yale data set\footnote{Data sourced from \url{https://www.kaggle.com/datasets/olgabelitskaya/yale-face-database}}, consisting of 165 gray-scale images of 15 individuals, each having 11 images, representing different facial expressions or configurations. The task is to identify the most important facial features that predict the identity of the individual, such as the positions of certain landmarks on the face or the intensities of certain regions. To achieve this, the orthogonal least squares regression can be used to extract the most informative features that are correlated with the identity labels of individuals. 

Orthogonal least squares regression (OLSR) is a regression technique that involves finding an orthogonal transformation matrix $X \in \mathbb{R}^{m \times n}$ to project high-dimensional data (with dimension $m$) into a lower-dimensional space (with dimension $n << m$). Hence, it is formulated as a standard unbalanced OPP of the form
\begin{equation}\label{OPPstd}
	\begin{array}{rl}
		\min & \|C-AX\|\\
		\hbox{s.t.} & X^TX = I. \\
	\end{array}
\end{equation}
 
To perform feature extraction using orthogonal least squares regression, we follow the approach described in \cite{un_eig}. Consider a data set $S = [s_1,...,s_p] \in \mathbb{R}^{m \times p}$, which contains $p$ samples with $m$ features drawn from $n$ classes. In the Yale data set, we have $p=165$ images (samples) with $m=256$ features corresponding to $n=15$ individuals.  The task is to identify the most important facial features that predict the identity of the individual. Let $K = [k_1,...,k_p] \in \mathbb{R}^{n \times p}$ be the class indicator matrix. This means that if the image $s_i$ belongs to the $j$-th individual, then $k_i = e_j$, where $e_j \in \mathbb{R}^n$ is the $j$-th column of the standard basis. The model includes an orthogonal transformation matrix $X \in \mathbb{R}^{m \times n}$ and an associated bias $b \in \mathbb{R}^n$. Both $X$ and $b$ are determined using the orthogonal least squares regression, which is formulated as an OPP of the form (\cite{un_eig})
\begin{equation}
\label{OLSR_yale}
\begin{array}{rl}
\min & h(X,b) = \|S^TX + {\bf 1}_pb^T - K^T\|_F \\
& X^TX = I,
\end{array}
\end{equation}
where $S \in \mathbb{R}^{m \times p}$ and $K \in \mathbb{R}^{n \times p}$ are the given data described above.

\begin{table}[!t]
\centering
\begin{tabular}{|c||c||c|c|c|}
\hline
\multirow{2}{*}{norm}  &    \multirow{2}{*}{criterion}     &  \multirow{2}{*}{\begin{tabular}[c]{@{}l@{}} OLSR \\ method \cite{zhao} \end{tabular}}   & \multirow{2}{*}{\begin{tabular}[c]{@{}l@{}} SDP \\ relaxation \end{tabular}} & \multirow{2}{*}{\begin{tabular}[c]{@{}l@{}} $\log$-$\det$ \\ heuristic \end{tabular} } \\ 
&&&& \\ \hline \hline
\multirow{4}{*}{Frobenius} & $\|C-AX\|_F$   &   2.9571   &    2.9570    &    2.9570    \\ \cline{2-5}
          & $\|X^TX-I_n\|_F$   &    4.3586e-15    &   1.0000  &    1.1725e-08   \\ \cline{2-5}
          & $\varepsilon\text{-}rank(V)$ &    256  &   258  &  256  \\ \cline{2-5}
          & time (s) &   0.4719   &   817.6205  &   1534.89        \\ \hline \hline
          \multirow{4}{*}{$l_1$} & $\|C-AX\|_1$   &   x   &    7.2534    &    7.2534   \\ \cline{2-5}
          & $\|X^TX-I_n\|_F$   &    x   &  0.3560   &   1.6922e-9  \\ \cline{2-5}
          & $\varepsilon\text{-}rank(V)$ &   x  &   270  &  256  \\ \cline{2-5}
          & time (s) &   x  &   51.58  &   106.2779   \\ \hline
\end{tabular}
\caption{Results of the feature extraction application \eqref{OPPstd} applied to the Yalefaces data set. Comparison of the OLSR algorithm \cite{zhao} and the proposed conic approach represented by the semi-definite relaxation, and the $\log$-$\det$ heuristic \eqref{logdet} applied to find a rank-$256$ solution among optimal solutions of the semi-definite relaxation.}
\label{table_example_OLSR}
\end{table}

Since $b$ can be expressed as $b=\frac 1p (K{\bf 1}_p - X^TS{\bf 1}_p)$, the formulation \eqref{OLSR_yale} is simplified to the OPP of the form \eqref{OPPstd}, where $A = (I_p - \frac 1p {\bf 1}_p {\bf 1}_p^T)S^T \in \mathbb{R}^{p \times m}$ and $C = (I_p - \frac 1p {\bf 1}_p {\bf 1}_p^T)K^T \in \mathbb{R}^{p \times n}$.

To solve the problem \eqref{OPPstd}, we used an OLSR algorithm introduced in \cite{zhao}\footnote{Accesible at \url{https://github.com/StevenWangNPU/OLSR_NC2016}}. Our aim is to assess the precision of a solution found by solving the proposed rank-constrained SDP reformulation. 
In Table \ref{table_example_OLSR}, our results are compared with those obtained by the OLSR algorithm. It can be seen that the $\log$-$\det$ heuristic was able to find an orthogonal solution among the optimal solutions of the semi-definite relaxation.
The comparison of computation time matches our expectations since the conic approach is based on solving large SDP problems, while the OLSR iteratively computes singular value decompositions.

As stated in \cite{zhao}, the $l_1$ norm is more suitable for feature extraction that the Frobenius norm, since it is robust to outliers. 
Nonetheless, the proposed conic approach, represented by the semi-definite relaxation and the modified $\log$-$\det$ heuristic can handle this type of OPP. Moreover, the computational time is in this case much lower, which is related tot he lower number of  variables, (see Table~\ref{table_example_OLSR}).

\subsection{Application - Graph isomorphism problem}

This subsection describes the graph isomorphism problem and its formulation in the form of a two-sided OPP. 
It is known that a graph is a set of vertices connected by edges. For the sake of simplicity, we only consider unweighted undirected graphs. Determining if two graphs are isomorphic or not is important in various fields such as chemistry, computer science, and data mining.

We can label the $n$ vertices of a simple graph as $1, 2, ..., n$. The graph can then be defined by its adjacency matrix $A \in \mathbb{R}^{n \times n}$, where each element $A_{ij} \in \lbrace 0, 1 \rbrace$ indicates whether vertices $i$ and $j$ are adjacent or not. The graphs $G$ and $\tilde{G}$ are isomorphic if and only if there is a permutation matrix $P \in \mathbb{R}^{n \times n}$ that satisfies 
 \begin{align}
	\label{isoP2}
	PA = \tilde{A}P.
	\end{align}

The task of the graph isomorphism problem is to determine whether the two given graphs are isomorphic, i.e. to verify whether there exists a permutation matrix satisfying (\ref{isoP2}). 

The authors of \cite{TB} proposed a method for  the graph isomorphism problem by formulating it as an integer linear program. Its LP relaxation, though being computationally efficient, lead to a solution with real-valued elements and therefore sophisticated rounding algorithms needed to be applied to get a permutation matrix. 
To avoid problems with rounding, we  represent the permutation matrix as an orthogonal matrix having non-negative elements. Consequently, the graph isomorphism problem can be formulated as a so-called two-sided OPP (see \cite{gowerOPP}) of the following form
\begin{equation}
\label{GIP}
\begin{array}{crcl}
\underset{P \in \mathbb{R}^{n \times n}}{\min} & \|PA - \tilde{A}P\|_1& {} & {}\\
&P^TP & = & I, \\
&P_{ij} & \geq& 0 \hspace{1cm} \forall i,j = 1,...,n. \\
\end{array}
\end{equation} 

Similarly to the previous subsection, the graph isomorphism problem \eqref{GIP} is formulated with the $l_1$ norm in the objective and the constraints of \eqref{GIP} define the set of permutation matrices $P \in \mathbb{R}^{n \times n}$. 
 
After applying Proposition \ref{equivalence} and representation of the orthogonality constraint from Table \ref{tab:qmcr}, \eqref{GIP} can be rewritten as the following rank-constrained SDP problem
\begin{equation}
\label{GIP-rank}
\begin{array}{crcl}
\min & t & {} & {}\\
&-S \hspace{0.1cm} \le \hspace{0.3cm} PA - \tilde{A}P & \le & S, \\
& S^T{\bf 1}_n & \le & t {\bf 1}_n, \\
&P_{ij} & \geq & 0, \hspace{1cm} \forall i,j = 1,...,n. \\
&V := \begin{bmatrix}
I_n & P^T \\ 
P & I_n
\end{bmatrix} & \succeq & 0,\\
&rank(V) & = & n,
\end{array}
\end{equation}
where $P \in \mathbb{R}^{n \times n}, t \in \mathbb{R}, S \in \mathbb{R}^{n \times n}, V \in \mathbb{S}^{2n}$ are variables.

Furthermore, we can exploit the fact that two graphs represented by their adjacency matrices $A$ and $\tilde{A}$ are isomorphic if and only if $P$ is a feasible solution of \eqref{GIP} with a zero optimal value. Therefore, in our numerical experiments, we solve the rank-constrained SDP problem \eqref{GIP-rank} using the $\log$-$\det$ heuristic \eqref{logdet} and the convex iteration \eqref{CI1},\eqref{CI2} to solve \eqref{rankfeas} where $\gamma$ is a small positive constant. We solve \eqref{GIP-rank} for four pairs of adjacency matrices of different sizes $n\in \lbrace 4,6,16,25 \rbrace$ that represent pairs of isomorphic graphs, and we search for a permutation matrix that satisfies \eqref{isoP2}. The results summarized in Table \ref{table_GIP} confirm that the found solutions are permutation matrices that can be considered $\varepsilon$-optimal solutions of the graph isomorphism problem \eqref{GIP}

\begin{table}[!ht]
\centering
\begin{tabular}{|c||c||c|c|c|}
\hline
                                                                  method     &    criterion       & $n=6$        & $n=16 $     & $n=25$       \\ \hline \hline
\multirow{9}{*}{\begin{tabular}[c]{@{}l@{}}SDP\\ relaxation\end{tabular}}                         & $\|PA - \tilde{A}P\|_1$    & 3.5187     & 6.0017     & 0.8683     \\ \cline{2-5}
                                                 & $\|P^TP-I_n\|_F$    &  1.5643     & 2.4076     & 3.7997     \\ \cline{2-5}
                                                   & $rank(V)$              & 11         & 31         & 49         \\ \cline{2-5}
                                                   & $\|P {\bf 1}_n - {\bf 1}_n\|_1$  & 1.4627e-11 & 1.1875e-10 & 2.0370e-12 \\ \cline{2-5}
                                                 & $\|P^T {\bf 1}_n - {\bf 1}_n\|_1$ & 1.4627e-11 & 1.1875e-10 & 2.0367e-12 \\ \cline{2-5}
                                                 & $\|o_{max} - {\bf 1}_n\|_1$        & 2.6188     & 5.8639     & 13.8898    \\ \cline{2-5}
                                              & $\|z_{min} - {\bf 0}_{n(n-1)}\|_1$   & 2.6188     & 5.8639     & 13.8898    \\ \cline{2-5}
                                                  & time (s)    & 0.2827     & 0.8153     & 1.9829     \\ \cline{2-5}
                                                  & empirical $\varepsilon$      & 0.5958    & 1.6218    & 3.9723         \\ \hline \hline 
\multirow{9}{*}{\begin{tabular}[c]{@{}l@{}}  $\log$-$\det$\\ heuristic \\ ($\gamma$) \end{tabular}} & $\|PA - \tilde{A}P\|_1$  &  4.2576e-13 & 1.2472e-13 & 7.6111e-7  \\ \cline{2-5}
                                                                       & $\|P^TP-I_n\|_F$    & 1.4113e-7  & 1.2744e-7  & 6.1667e-6  \\ \cline{2-5}
                                                                       & $rank(V)$       & 6          & 16         & 25         \\ \cline{2-5}
                                                                       & $\|P {\bf 1}_n - {\bf 1}_n\|_1$ & 1.3124e-12 & 5.5889e-10 & 9.6092e-9  \\ \cline{2-5}
                                                                       & $\|P^T {\bf 1}_n - {\bf 1}_n\|_1$ & 1.3096e-12 & 5.5889e-10 & 9.6092e-9  \\ \cline{2-5}
                                                                       & $\|o_{max} - {\bf 1}_n\|_1$  & 7.0571e-8  & 2.072e-7   & 1.4358e-5  \\ \cline{2-5}
                                                                       & $\|z_{min} - {\bf 0}_{n(n-1)}\|_1$  & 7.0570e-8  & 2.0716e-7  & 1.4348e-5  \\ \cline{2-5}
                                                                       & time (s)     & 0.5958     & 1.6218    & 3.9723     \\ \cline{2-5}
                                                                       & empirical $\varepsilon$  &  7.0564e-8  & 2.5362e-8  & 9.3755e-7  \\ \hline \hline
\multirow{9}{*}{\begin{tabular}[c]{@{}l@{}}  convex \\ iteration \\ ($\gamma$) \end{tabular}}     & $\|PA - \tilde{A}P\|_1$    & 2.4484e-14 & 7.1029e-15 & 4.6085e-15 \\ \cline{2-5}
                                                                       & $\|P^TP-I_n\|_F$ &  1.2035e-9  & 4.4462e-9  & 4.4291e-8  \\ \cline{2-5}
                                                                       & $rank(V)$       & 6          & 16         & 25         \\ \cline{2-5}
                                                                       & $\|P {\bf 1}_n - {\bf 1}_n\|_1$ & 1.7152e-12 & 1.9921e-10 & 1.0913e-9  \\ \cline{2-5}
                                                                       & $\|P^T {\bf 1}_n - {\bf 1}_n\|_1$ & 1.7154e-12 & 1.9921e-10 & 1.0913e-9  \\ \cline{2-5}
                                                                       & $\|o_{max} - {\bf 1}_n\|_1$  & 1.0254e-9  & 8.5908e-9  & 1.0850e-7  \\ \cline{2-5}
                                                                       & $\|z_{min} - {\bf 0}_{n(n-1)}\|_1$  & 1.0247e-9  & 8.3916e-9  & 1.0744e-7  \\ \cline{2-5}
                                                                       & time (s)     & 0.7158     & 3.4213     & 11.5446    \\ \cline{2-5}
                                                                       & empirical $\varepsilon$   & 4.3997e-10 & 7.2807e-10 & 4.5204e-9  \\ \hline
\end{tabular}
\caption{Results obtained by the semi-definite relaxation, the $\log$-$\det$ heuristic and the convex iteration for $\gamma = 10^{-6}$ in solving two-sided OPPs of the form \eqref{GIP-rank} representing the graph isomorphism problem \eqref{GIP}.}
\label{table_GIP}
\end{table}

\section{Conclusion}
In this paper we have demonstrated the applicability of the conic programming approach to a wide class of Procrustes problems related to four matrix norms. The proposed approach uses (but is not limited to) known heuristics for rank-constrained feasibility problems, in combination with a suitable bi-section algorithm. It has been shown that, in the case of specific sub-classes (such as orthogonal Procrustes problems with Frobenius norm), this approach cannot compete with  algorithms based on singular-value decomposition or extensions of the standard gradient algorithms to the Stiefel manifolds. On the other hand, it can handle other sub-classes, for which no suitable methods are known. The applicability of the proposed approach was verified on two applications -- by solving orthogonal least squares regression for feature extraction and the graph isomorphism problem. Furthermore, the techniques can be easily extended to the case of multiple norm criteria, e.g. appearing in the regularized Procrustes problems.

\subsection*{Acknowledgement} 
\noindent
The research was supported by the APVV-20-0311 project of the Slovak Research and Development Agency.

\appendix

\section{Auxilliary results }\label{apend}

\begin{lemma}
\label{schur}
Let
$$
M = \begin{pmatrix} A & B \\
B^T & C
\end{pmatrix}
$$
be a symmetric matrix with square blocks $A$ and $C$. 

a) {\cite[\S 4.2.1]{modern}, \cite{zhang}} If $A$ is positive
definite, then $M$ is positive (semi)definite if and only if the Schur complement $C - B^TA^{-1}B$ is positive (semi)definite. 

If $C$ is positive
definite, then $M$ is positive (semi)definite if and only if the Schur complement $A - BC^{-1}B^T$  is positive (semi)definite.

b) {\cite[\S A.4]{dattorro}, \cite{zhang}}
If $A$ is invertible, then 
\begin{align}
rank M =  rank \left( A \right) + rank \left( C - B^TA^{-1}B \right).
\end{align}
\end{lemma}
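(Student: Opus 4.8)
The plan is to derive both parts from a single block factorization. Since $M$ is symmetric with square diagonal blocks, we have $A = A^T$ and $C = C^T$, and whenever $A$ is invertible one checks by direct multiplication the identity
\[
M = \begin{pmatrix} I & 0 \\ B^T A^{-1} & I \end{pmatrix}
\begin{pmatrix} A & 0 \\ 0 & C - B^T A^{-1} B \end{pmatrix}
\begin{pmatrix} I & A^{-1} B \\ 0 & I \end{pmatrix}.
\]
Because $A^{-1}$ is again symmetric, the left and right factors are transposes of one another, and both are unit triangular, hence invertible. Verifying this identity is the only genuine computation, and it is elementary (expand the product column block by column block).

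For part a) I would observe that the displayed factorization exhibits $M$ as congruent to the block-diagonal matrix $D := diag(A,\, C - B^T A^{-1} B)$. By Sylvester's law of inertia, $M$ and $D$ have the same numbers of positive, negative, and zero eigenvalues; in particular $M \succeq 0$ if and only if $A \succeq 0$ and $C - B^T A^{-1} B \succeq 0$, and $M \succ 0$ if and only if both blocks are positive definite. Under the hypothesis $A \succ 0$ the condition on $A$ holds automatically, which leaves precisely the stated criterion on the Schur complement $C - B^T A^{-1} B$, in both the definite and the semidefinite case. The symmetric statement in which $C$ is assumed positive definite follows by swapping the two block rows and the two block columns of $M$ (a congruence by a permutation matrix) and applying the case just proved, or equivalently by using the mirror-image factorization built from the $(2,2)$ block.

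For part b) I would reuse the same factorization, now needing only that $A$ is invertible (not definite). Left- and right-multiplication by invertible matrices preserves rank, so
\[
rank\, M = rank\, D = rank(A) + rank(C - B^T A^{-1} B),
\]
where the last equality is the additivity of rank over the blocks of a block-diagonal matrix. I do not expect a real obstacle anywhere: the argument rests on one verified identity plus two textbook facts (Sylvester's law of inertia, and invariance of rank under pre- and post-multiplication by invertible matrices). The only point deserving a line of care is recording that $A$ — being a diagonal block of the symmetric matrix $M$ — is itself symmetric, so that the two triangular factors in the factorization are genuine transposes of each other, which is what makes the transformation in part a) a congruence.
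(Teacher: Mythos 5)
Your proof is correct. Note, however, that the paper does not actually prove Lemma~\ref{schur}: it is stated as a known textbook fact with citations to \cite{modern}, \cite{dattorro} and \cite{zhang}, so there is no in-paper argument to compare yours against. What you supply is the standard self-contained derivation: the block factorization
$M = P^T \, \mathrm{diag}\bigl(A,\; C - B^T A^{-1} B\bigr)\, P$ with $P = \begin{pmatrix} I & A^{-1}B \\ 0 & I \end{pmatrix}$ unit triangular, which you correctly verify and correctly identify as a congruence (using that $A$, as a diagonal block of the symmetric $M$, is symmetric, so $A^{-1}$ is too). From there, Sylvester's law of inertia (or, even more elementarily, the substitution $x \mapsto Px$ in the quadratic form) gives part a), the block-swap permutation congruence gives the mirror statement with $C$ positive definite, and invariance of rank under multiplication by the invertible factors gives part b). All steps are sound and nothing is missing; this is exactly the argument the cited sources give, so you have in effect filled in a proof the paper chose to outsource.
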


\begin{cor}\label{schurcor}
For $B\in \mathbb{R}^{p\times q}$ the following are equivalent: 
$$a) 
\begin{pmatrix} sI_p & B \\
B^T & s_qI
\end{pmatrix}\succeq 0 \quad b) 
s^2 I_p-BB^T\succeq 0, \quad c) s^2I_q-B^TB\succeq 0.
$$
\end{cor}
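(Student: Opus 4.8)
The plan is to derive all three equivalences from the Schur complement characterization in Lemma~\ref{schur}~a), separating the cases $s>0$ and $s=0$. First I would observe that the statement is only of interest for $s\ge 0$: if $s<0$ then the diagonal block $sI_p$ is not positive semi-definite, so condition a) already fails (principal submatrices of a positive semi-definite matrix are positive semi-definite), and in every application of the corollary in Proposition~\ref{equivalence} the scalar $s$ is a matrix norm, hence nonnegative.

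For $s>0$ the diagonal blocks $sI_p$ and $sI_q$ are positive definite, so both branches of Lemma~\ref{schur}~a) are available. Using the branch in which the upper-left block is invertible, a) is equivalent to the Schur complement $sI_q-B^T(sI_p)^{-1}B=sI_q-\tfrac1s\,B^TB$ being positive semi-definite; scaling by the positive scalar $s$ this becomes $s^2I_q-B^TB\succeq 0$, i.e.\ c). Using instead the branch in which the lower-right block is invertible, a) is equivalent to $sI_p-B(sI_q)^{-1}B^T=sI_p-\tfrac1s\,BB^T\succeq 0$, i.e.\ $s^2I_p-BB^T\succeq 0$, which is b). Hence a)$\,\Leftrightarrow\,$b)$\,\Leftrightarrow\,$c) for all $s>0$.

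The case $s=0$ must be treated separately, since Lemma~\ref{schur}~a) requires an invertible diagonal block. Here I would argue directly: a) reads $\begin{pmatrix}0&B\\ B^T&0\end{pmatrix}\succeq 0$, and evaluating the associated quadratic form on the vectors $(x,y)$ and $(x,-y)$ yields $x^TBy=0$ for all $x\in\mathbb{R}^p$, $y\in\mathbb{R}^q$, forcing $B=0$; on the other hand b) and c) reduce to $-BB^T\succeq 0$ and $-B^TB\succeq 0$, and since $BB^T$ and $B^TB$ are always positive semi-definite, each of these holds if and only if $B=0$. Thus at $s=0$ all three conditions are equivalent to $B=0$. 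I expect no genuine obstacle in this proof; the only point needing a little care is this boundary case $s=0$, where the Schur complement lemma is not directly applicable and the elementary argument above is used instead.
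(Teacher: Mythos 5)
Your proof is correct and follows essentially the same route as the paper: the case $s>0$ is dispatched by the two branches of the Schur complement characterization in Lemma~\ref{schur}~a), and the case $s=0$ is handled separately by observing that each of a), b), c) then forces $B=0$. Your treatment is merely more explicit than the paper's (which states the $s=0$ reduction and the appeal to Lemma~\ref{schur}~a) without spelling out the computations), and your side remark that the equivalence should be read for $s\ge 0$ is a reasonable clarification consistent with how the corollary is used.
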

\begin{proof}
Note that if $s=0$ in a), b) or c), then $B=0$. In this case, the equivalence is trivial. 
The case $s>0$ follows from Lemma \ref{schur} a). 
\end{proof}

\begin{lemma}{\cite{sundai}}
\label{schurq}
Let $G \in \mathbb{S}^n$ and $X \in \mathbb{R}^{m \times n}$. 
Then
\begin{align}
G = X^TX \hspace{0.4cm} \Leftrightarrow \hspace{0.4cm}  \Bigg[ G \succeq X^TX \hspace{0.1cm} \wedge \hspace{0.1cm} rank \left( \left[ \begin{array}{cc} I_{m} & X \\ X^T & G \end{array}  \right] \right) = m \Bigg].
\end{align}
\end{lemma}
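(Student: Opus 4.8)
The plan is to reduce everything to the Schur-complement identities collected in Lemma \ref{schur}, applied to the block matrix
$$
M := \begin{pmatrix} I_m & X \\ X^T & G \end{pmatrix},
$$
whose leading block $I_m$ is invertible, so that Lemma \ref{schur} b) gives the exact rank formula $rank(M) = m + rank(G - X^TX)$ unconditionally (it uses only invertibility of $I_m$, not any definiteness).

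For the forward implication I would assume $G = X^TX$. Then trivially $G \succeq X^TX$ with equality, and substituting $C = G = X^TX$, $B = X$, $A = I_m$ into the rank formula yields $rank(M) = m + rank(0) = m$, which is precisely the right-hand side.

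For the reverse implication I would start from $rank(M) = m$. The rank formula of Lemma \ref{schur} b) then forces $rank(G - X^TX) = 0$, i.e. $G - X^TX = 0$, which is the desired conclusion $G = X^TX$. The hypothesis $G \succeq X^TX$ is not strictly needed for this deduction; its role is that, via Lemma \ref{schur} a) (the Schur complement of the invertible block $I_m$ is $G - X^TX \succeq 0$), it guarantees $M \succeq 0$, which is the form in which this block matrix enters the quadratic-constraint representations of Table \ref{tab:qmcr}.

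I do not expect a genuine obstacle here: the statement is essentially a repackaging of the two parts of Lemma \ref{schur}, and the only point that deserves a word of care is to observe that the rank identity holds merely because $I_m$ is invertible, so that the positive-semidefiniteness hypothesis plays the bookkeeping role of placing $M$ inside the PSD cone rather than doing work in the rank argument itself.
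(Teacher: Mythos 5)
The paper does not prove this lemma --- it is imported verbatim from \cite{sundai} with no argument given --- so there is no in-paper proof to compare against. Your derivation is correct and complete: Lemma \ref{schur}~b) applied to the block $A = I_m$ gives $rank(M) = m + rank(G - X^TX)$ unconditionally, which settles both directions at once, and your observation that the hypothesis $G \succeq X^TX$ is logically redundant in the reverse implication (serving only to place $M$ in the PSD cone for the representations of Table \ref{tab:qmcr}) is accurate.
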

Since the trace of a positive semi-definite matrix is non-negative, the following property holds: 
\begin{lemma}\label{stopa}
Let $X, Y \in\mathcal{S}^n$. If $X\succeq Y$, then $tr(X)\ge tr(Y).$
\end{lemma}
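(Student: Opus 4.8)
The plan is to reduce the claim to the single elementary fact announced in the sentence immediately preceding the statement: a symmetric positive semi-definite matrix has non-negative trace. Everything else is bookkeeping with the linearity of the trace.

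First I would set $D := X - Y$. By the hypothesis $X \succeq Y$, the matrix $D$ is symmetric, $D \in \mathcal{S}^n$, and positive semi-definite, $D \succeq 0$. Next I would invoke that the eigenvalues $\lambda_1(D), \dots, \lambda_n(D)$ of $D$ are all non-negative, so that $tr(D) = \sum_{i=1}^n \lambda_i(D) \ge 0$; equivalently, one may write $D = R^T R$ for some real matrix $R$ and note $tr(D) = tr(R^T R) = \|R\|_F^2 \ge 0$. Finally, by linearity of the trace, $tr(X) - tr(Y) = tr(X - Y) = tr(D) \ge 0$, which is exactly $tr(X) \ge tr(Y)$.

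The main obstacle is essentially none: the only point needing care is to correctly cite the standard facts that positive semi-definiteness implies non-negativity of every eigenvalue (hence of the trace) and that the trace is linear; both are completely standard, so the proof is a two-line argument.
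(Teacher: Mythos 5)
Your proof is correct and follows exactly the route the paper takes: the paper justifies the lemma by the single remark that the trace of a positive semi-definite matrix is non-negative, applied implicitly to $X-Y\succeq 0$, and your argument just spells out that reduction together with the linearity of the trace.
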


\begin{lemma}\cite{dattorro, zhang}
\label{sumeig}
Let $X \in \mathbb{S}^{n}_+$, $k \le n$ and $\lambda_1 \geq \lambda_2 \geq ... \geq \lambda_n  \geq 0$ be eigenvalues of $X$.
Then it holds
\begin{align}
\label{sum_eig_first}
\lambda_1 + \lambda_2 + ... + \lambda_k = tr(X) \hspace{0.2cm} \Leftrightarrow \hspace{0.2cm} rank(X) \le k,
\end{align}
and
\begin{align}
\label{sum_eig_last}
\lambda_{k+1} + \lambda_{k+2} + ... + \lambda_n = 0 \hspace{0.2cm} \Leftrightarrow \hspace{0.2cm} rank(X) \le k.
\end{align}
\end{lemma}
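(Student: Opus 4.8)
The plan is to reduce both equivalences to two elementary spectral facts about a symmetric positive semi-definite matrix $X$: by the spectral theorem $X$ admits an orthonormal eigenbasis, so that $tr(X)=\sum_{i=1}^n \lambda_i$ and $rank(X)$ equals the number of indices $i$ with $\lambda_i\neq 0$; moreover, $X\succeq 0$ forces every $\lambda_i\ge 0$. These are the only ingredients needed.

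First I would establish \eqref{sum_eig_last}. Writing $tr(X)=\sum_{i=1}^n\lambda_i$, the quantity $\lambda_{k+1}+\dots+\lambda_n$ is a finite sum of nonnegative numbers, hence it vanishes if and only if $\lambda_{k+1}=\dots=\lambda_n=0$. Because the eigenvalues are arranged in nonincreasing order, the latter condition is equivalent to the statement that at most the first $k$ eigenvalues are nonzero, i.e. $rank(X)\le k$. This proves \eqref{sum_eig_last}.

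Then \eqref{sum_eig_first} is immediate: since $tr(X)=\sum_{i=1}^n\lambda_i$, the identity $\lambda_1+\dots+\lambda_k=tr(X)$ holds if and only if $\lambda_{k+1}+\dots+\lambda_n=tr(X)-(\lambda_1+\dots+\lambda_k)=0$, and by the previous paragraph this is equivalent to $rank(X)\le k$.

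There is no genuine obstacle here; the argument is purely spectral. The one point that must not be overlooked is the hypothesis $X\succeq 0$: nonnegativity of the eigenvalues is exactly what allows passing from ``a sum of eigenvalues equals zero'' to ``each of those eigenvalues equals zero'' (and, for \eqref{sum_eig_first}, from ``a partial sum equals the trace'' to ``the remaining eigenvalues vanish''). For an indefinite symmetric matrix both equivalences can fail, since cancellation among eigenvalues of opposite sign could make such a sum vanish without any rank deficiency.
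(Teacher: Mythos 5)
Your argument is correct and complete. Note that the paper itself gives no proof of Lemma \ref{sumeig} at all --- it is stated as a known fact with citations to the literature --- so there is nothing to compare against; your self-contained spectral argument (trace equals the sum of eigenvalues, rank equals the number of nonzero eigenvalues, and nonnegativity of the spectrum under $X\succeq 0$ turning ``the tail sum vanishes'' into ``each tail eigenvalue vanishes'') is exactly the standard route and correctly identifies positive semi-definiteness as the hypothesis that makes both equivalences work.
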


\bibliographystyle{elsarticle-num}
\bibliography{BibFile.bib}

\end{document}